\newtheorem{theorem}{Theorem}[section]
\newtheorem{corollary}[theorem]{Corollary}
\newtheorem{lemma}[theorem]{Lemma}
\theoremstyle{definition}
\theoremstyle{remark}
\newtheorem{remark}[theorem]{Remark}
\DeclareMathOperator{\res}{res}
\newcommand{\abs}[1]{|#1|}
\newcommand{\ch}{\operatorname{ch}}
\newcommand{\Hilb}[2]{{#2}^{[#1]}}
\newcommand{\Set}[1]{\left\{#1\right\}}
\newcommand{\sheaf}[1]{\mathcal #1}
\newcommand{\set}[1]{\mathbf{#1}}
\newcommand{\units}{\times}
\newcommand{\state}[1]{|#1\rangle}
\newcommand{\vac}{\state{0}}
\title[Characteristic classes on Hilbert schemes]{Characteristic classes of the Hilbert schemes of points
on non-compact simply-connected surfaces}
\author{Marc A.~Nieper-Wi\ss kirchen}
\address{Max-Planck-Insitut for Mathematics, Vivatsgasse 7, 53111 Bonn, Germany}
\email{marc@nieper-wisskirchen.de}
\thanks{The author would like to thank the Max Planck Institute for Mathematics
in Bonn for its hospitality and support during the preparation of this paper.}
\thanks{The theorems in this paper supersede the results of the author's
earlier paper ``Equivariant cohomology, symmetric functions and the Hilbert scheme of points on the total space of the
inverstible sheaf $\sheaf O_{\set P^1}(-2)$ over the projective line''.}
\date{\today}
\begin{document}

    \begin{abstract}
    	We prove a closed formula expressing any multiplicative characteristic class
    	evaluated on the tangent bundle of the Hilbert schemes of points
    	on a non-compact simply-connected surface.
    	
    	As a corollary, we deduce a closed formula for the Chern character of the tangent
    	bundles of these Hilbert schemes.
    	
    	We also give a closed formula for the multiplicative characteristic classes
    	of the tautological bundles associated to a line bundle on the surface.

        We finally remark which implications the results here have for the Hilbert schemes of points of
        an arbitrary surface.
    \end{abstract}

	\maketitle

    \tableofcontents
    
    \section{Introduction}
    
    Let $X$ be a quasi-projective connected smooth surface over the complex numbers
    (\emph{surface} for short).
    We denote by $\Hilb n X$ the Hilbert scheme of $n$ points on $X$,
    parametrising zero-dimensional subschemes of $X$ of length
    $n$. It is a quasi-projective variety (\cite{grothendieck:hilbert})
    and smooth of dimension $2n$ (\cite{fogarty}).
    
    The direct sum $\bigoplus_{n \ge 0} H^*(\Hilb n X, \set Q[2n])$ of the (shifted)
    cohomology spaces of all Hilbert schemes carries a natural grading given by the cohomological
    degree and a weighting given by the number of points $n$. Likewise, the symmetric algebra
    $S^*(t^{-1} H^*(X, \set Q[2])[t^{-1}])$ carries a grading by cohomological degree and
    a weighting. The weighting is defined such that $t^{-n} H^*(X, \set Q[2])$ is of pure weight $n$. 
    
    There is a natural vector space isomorphism
    \[
        S^*(t^{-1} H^*(X, \set Q[2])[t^{-1}]) \to \bigoplus_{n \ge 0} H^*(\Hilb n X, \set Q[2n])
    \]
    respecting the grading and weighting (\cite{grojnowski}, \cite{nakajima}). The image of $1$
    under the isomorphism is denoted by $\vac$ and called the \emph{vacuum}.
    For each $l > 0$ and $\alpha \in H^*(X, \set Q[2])$, the linear operator
    on $\bigoplus_{n \ge 0} H^*(\Hilb n X, \set Q)$ that corresponds via this isomorphism
    to multiplying by $t^{-l} \alpha$ is denoted by $q_l(\alpha)$.
    These operators are called \emph{creation operators}.
    
    One open problem in the study of Hilbert schemes is to express any characteristic class
    of $\Hilb n X$ by a closed formula in terms of the creation operators applied on the
    vacuum.
    
    The following general result holds (\cite{boissiere:tangent}, \cite{boissiere-nieper:universal}):
    For each multiplicative characteristic
    class $\phi$ with values in a $\set Q$-algebra~(\cite{hirzebruch:methods}),
    there are unique series $(a_\phi^\lambda)_\lambda$,
    $(b_\phi^\lambda)_\lambda$, $(c_\phi^\lambda)_\lambda$, $(d_\phi^\lambda)_\lambda$
    of elements in $A$, indexed by the set $\Set \lambda$ of all
    partitions, such that
    \begin{equation}
        \label{eq:universal_formula}
    	\sum_{n \ge 0} \phi(\sheaf T_{\Hilb n X}) = \exp\sum_\lambda \left(
    	    a_\phi^\lambda q_\lambda(1) + b_\phi^\lambda q_\lambda(K_X)
    	    + c_\phi^\lambda q_\lambda(e_X) + d_\phi^\lambda q_\lambda(K^2_X)
    	\right) \vac
    \end{equation}
    holds for all smooth surfaces $X$
    (where we have to view the equation in the completion
    $\prod_{n \ge 0} H^*(\Hilb n X, \set Q[2])$ with respect to
    the weighting).
    
    Here, the operator $q_\lambda(\alpha)$ for an $r$-tuple $\lambda = (\lambda^1, \cdots,
    \lambda^r)$ (e.g.~a partition of length $r$)
    and a class $\alpha \in H^*(X, \set Q[2])$ is defined as follows:
    Let $\delta^r\colon X \to X^r$ be the diagonal embedding and
    $\delta^r_!\colon H^*(X, \set Q[2]) \to H^*(X, \set Q[2])^{\otimes r}$ the induced
    proper push-forward map (which is of degree $2r - 2$). We then set
    $q_\lambda(\alpha) := \sum q_{\lambda_1}(\alpha_{(1)}) \cdots q_{\lambda_r}(\alpha_{(r)})$,
    where $\delta^r_! \alpha = \sum \alpha_{(1)} \otimes \cdots \otimes \alpha_{(r)}$ (in Sweedler notation).
    
    It remains to express the coefficients $(a_\phi^\lambda), \ldots, (d_\phi^\lambda)$
    by a closed formula depending on $\phi$. In this paper, we completely solve the problem
    for all non-compact simply-connected surfaces.
    
    We also give a similar result for the \emph{tautological bundles $\Hilb n {\sheaf F}$}
    on the Hilbert schemes of $X$ associated to a line bundle $\sheaf F$ on $X$ (\cite{lehn:tautological}).
    The tautological bundle $\Hilb n {\sheaf F}$ is a vector bundle of rank $n$ on $\Hilb n X$, whose fibre over
    a point $\xi$ in $\Hilb n X$ (i.e.~a zero-dimensional subscheme of length $n$ on $X$) is given by
    $H^0(X, \sheaf F \otimes \sheaf O_\xi)$.
    
    By universality of the formulas for the characteristic classes (\cite{boissiere-nieper:universal},
    \cite{boissiere-nieper:generating}),
    the results found here also apply to compact or non-simply-connected surfaces. For an arbitrary surface,
    however, this gives only partial results.
    
    Let us finally remark that we haven't made any assumptions on the canonical divisor of $X$. This is
    noteworthy insofar as many results for Hilbert schemes have been obtained in closed form only for $K_X = 0$.
    
    \section{The results}
    
    From now on, let $\phi$ be a fixed multiplicative characteristic class with values in the
    $\set Q$-algebra $A$. It is given by a power series $f \in 1 + x A[[x]]$ such that
    $\phi = \prod_i f(x_i)$, where the $x_i$ are the Chern roots (\cite{hirzebruch:methods}).

    Let $g \in xA[[x]]$ be the compositional inverse of the power series
    $G := \frac x{f(x) f(-x)}$.

    The first main result of this paper is the following:
    \begin{theorem}
        \label{thm:tangent}
        Let $X$ be non-compact simply-connected surface.
        The multiplicative class $\phi$ evaluated on the tangent bundle
        of the Hilbert schemes of points on $X$ is given by
        \[
            \sum_{n \ge 0} \phi(\sheaf T_{\Hilb n X}) = \exp\left(
              \sum_{k \ge 1} (a_k q_k(1) + b_k q_k(K_X))
              + \sum_{k, l \ge 1} a_{k, l} q_{(k, l)}(1)
            \right) \vac,
        \]
        where the $A$-valued sequences $(a_k)_k$, $(b_k)_k$, and $(a_{k, l})_{k, l}$
        are defined by
        \[
            \sum_{k \ge 1} k a_k x^k = g(x),\quad \sum_{k \ge 1} b_k = \frac 1 2 \log \frac {f^2(-g(x))} {g'(x)},
        \]
        and
        \[
            \sum_{k, l \ge 1} a_{k, l} x^k y^l = \frac 1 2 \log
            \frac{(x - y) f(g(x) - g(y)) f(g(y) - g(x))}{g(x) - g(y)}.
        \]
    \end{theorem}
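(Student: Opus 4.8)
The plan is to extract the \emph{shape} of the formula from the universal formula~\eqref{eq:universal_formula} and then to determine its coefficients by equivariant localisation on model non-compact surfaces. For the shape: since $X$ is non-compact and simply connected, $H^1(X,\set Q)=0$ and $H^4(X,\set Q)=0$ (a connected non-compact real $4$-manifold has no top cohomology), so $e_X=K_X^2=0$ and every $c_\phi^\lambda$- and $d_\phi^\lambda$-term in~\eqref{eq:universal_formula} drops out. By K\"unneth together with these vanishings, $\delta^2_!1$ lies in $H^2(X)\otimes H^2(X)$; multiplying either tensor factor by a class of positive degree is therefore zero because $H^{\ge4}(X)=0$, and by the projection formula this forces $\delta^r_!1=0$ for $r\ge3$ and $\delta^r_!K_X=0$ for $r\ge2$, i.e.\ $q_\lambda(1)=0$ for $\ell(\lambda)\ge3$ and $q_\lambda(K_X)=0$ for $\ell(\lambda)\ge2$. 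What remains of~\eqref{eq:universal_formula} is exactly the exponential in the theorem, with $a_k=a_\phi^{(k)}$, $b_k=b_\phi^{(k)}$ and $a_{k,l}=a_\phi^{(k,l)}$; in particular only these finitely many of the universal series matter.

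It then remains to compute these universal series, which I would do by Atiyah--Bott localisation on the Hilbert schemes of non-compact toric surfaces. For $X=\set C^2$ with the standard $2$-torus action the fixed points of $\Hilb n X$ are the monomial ideals $I_\mu$, $\mu\vdash n$, and the localisation formula reads $\phi(\sheaf T_{\Hilb n X})=\sum_{\mu\vdash n}\Bigl(\prod_{\square\in\mu}f(w_\square)f(w'_\square)/(w_\square w'_\square)\Bigr)[I_\mu]$, where $w_\square$ and $w'_\square$ are the two equivariant weights attached to a box $\square$ (the arm--leg weights). One then rewrites the fixed-point classes $[I_\mu]$ in the Nakajima creation-operator basis --- equivalently, via their identification with the integral-form Jack symmetric functions under the isomorphism of the Fock space with the ring of symmetric functions --- and takes the logarithm. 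The length-one cumulant is fed by the ``thin'' (one-row) ideals $I_{(k)}$, whose tangent weights are $t_1-jt_2$ for $0\le j<k$ and $(j+1)t_2$ for $0\le j<k$, so that $f(w)f(-w)=w/G(w)$ enters; summing over $k$ and matching with $\exp(\sum_k a_k q_k(1))\vac$ gives, by Lagrange inversion, exactly $\sum_k k a_k x^k=g(x)$. The series $b_k$ (the $K_X$-dependence) and $a_{k,l}$ (the two-point term) come out of the same computation carried out on non-compact toric surfaces with non-trivial canonical class and non-trivial second cohomology --- blow-ups of $\set C^2$, or ALE spaces --- now keeping track of the $t_1$-linear part of the weights and of the two-row ideals respectively; universality then extends the answer from these models to all non-compact simply-connected surfaces.

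The first step is formal; the real work is the second. The delicate points will be the transition from the fixed-point expansion to the creation-operator basis (controlling the transition matrix to the Jack symmetric functions, or alternatively summing the whole generating function into a closed product via a Cauchy-type identity and only afterwards taking the logarithm), the verification that no cumulant of length $\ge3$ contributes to the three series in question, and the Lagrange-inversion bookkeeping that turns the one-row and two-row weight products into the compositional inverse $g$ of $x/(f(x)f(-x))$ and into the two-variable logarithm, matching the stated formulas for $b_k$ and $a_{k,l}$.
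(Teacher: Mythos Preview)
Your reduction of the universal formula to the three series $(a_k)$, $(b_k)$, $(a_{k,l})$ via the vanishing of $H^{\ge 4}(X)$ and $H^1(X)$ is exactly the paper's first step, and the overall plan---equivariant localisation on toric model surfaces, conversion to the Nakajima basis via Jack symmetric functions, and Lagrange inversion---is also the paper's plan. The differences lie in the specific execution, and your proposal is vague at precisely the points where the paper's argument bites.

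The paper does not work on $\set C^2$ with a $2$-torus. It works on the total spaces $X_\gamma$ of $\sheaf O_{\set P^1}(-\gamma)$ for $\gamma = 2, 3$, with a one-dimensional torus $\set C^\units$ (the action of Li--Qin--Wang) having two isolated fixed points, so that the fixed points on $\Hilb n {X_\gamma}$ are indexed by \emph{bipartitions} $(\lambda^0, \lambda^1)$. At the first fixed point of $X_\gamma$ the tangent weights are $(-1,-1)$, so the Hilbert-scheme weights $W_{\lambda^0}(-1,-1)$ come in $\pm$-pairs (they are $\pm$ the hook lengths) and $F(x) = f(x)f(-x)$ enters automatically; on $\set C^2$ with the full $2$-torus your weights $t_1 - j t_2$ and $(j+1)t_2$ are not $\pm$-pairs without a further specialisation, so your claim that ``$f(w)f(-w)$ enters'' is not yet justified. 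More importantly, the paper's device for collapsing the fixed-point sum into closed form is a linear map $\rho$ that evaluates the Nakajima basis (equivalently, the symmetric functions) at only two variables $(x,y)$: since the Jack polynomials $P_\lambda^\alpha(x,y)$ vanish for $\ell(\lambda) > 2$, only partitions of length at most two survive, and the resulting explicit double sum $Z_\gamma(x,y)$ is then evaluated in closed form via the Lagrange--Good inversion formula. Your heuristic that ``the length-one cumulant is fed by the thin ideals $I_{(k)}$'' is not a proof---every fixed point contributes to every cumulant---and without a truncation device like $\rho$ you have no mechanism for closing the sum over all partitions. Finally, the paper reads off $a_{k,l}$ from $Z_2(x,y)$ alone (where $K_{X_2} = 0$, so the $b_k$-term is absent) and then isolates $b_k$ by comparing $Z_3(x,0)$ with $Z_2(x,0)$; this use of two specific values of $\gamma$ to separate the $K_X$-term from the $\delta^2_!1$-term replaces your appeal to unspecified blow-ups or ALE spaces.
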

    The theorem is proven in the following section.

    From this theorem, we can deduce a formula for the Chern character.
    \begin{corollary}
        The Chern character of the tangent bundle of the Hilbert schemes of points on $X$ is
        given by
    	\[
    	    \sum_{n \ge 0} \ch(\sheaf T_{\Hilb n X}) = \left(
              \sum_{k \ge 1} (a_k q_k(1) + b_k q_k(K_X))
              + \sum_{k, l \ge 1} a_{k, l} q_{(k, l)}(1)
            \right) \exp(q_1(1)) \vac,
        \]
        where the rational sequences $(a_k)_k$, $(b_k)_k$, and $(a_{k, l})_{k, l}$
        are defined by
        \[
            \sum_{k \ge 1} a_k x^k = \sum_{m \ge 0} \frac 2 {(2m + 1)!} \frac{x^{2m + 1}}{2m + 1},\quad
            \sum_{k \ge 1} b_k x^k = \sum_{m \ge 0} \frac {-1} {(2m + 1)!} (x^{2m + 1} + x^{2m + 2})
        \]
        and
        \[
            \sum_{k, l \ge 1} a_{k, l} x^k x^l = \sum_{m \ge 1} \frac 1{(2m)!}
            \sum_{k + l = 2m} \left((-1)^k \binom{2m}{k} - 1\right) x^k y^l.
        \]
    \end{corollary}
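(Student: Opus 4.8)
The plan is to deduce the Corollary from Theorem~\ref{thm:tangent} by realising the (additive) Chern character as the first-order term of a one-parameter family of multiplicative classes — equivalently, as the derivative at the origin of such a family. Concretely, I would work over the ring of dual numbers $A = \set Q[\varepsilon]/(\varepsilon^2)$ and take for $\phi$ the multiplicative class attached to the power series $f(x) = 1 + \varepsilon(e^x - 1)$. For any bundle $E$ with Chern roots $x_i$ one has, modulo $\varepsilon^2$,
\[
  \phi(E) = \prod_i\bigl(1 + \varepsilon(e^{x_i} - 1)\bigr) = 1 + \varepsilon\sum_i(e^{x_i} - 1) = 1 + \varepsilon\bigl(\ch(E) - \operatorname{rk}(E)\bigr),
\]
so that $\phi(\sheaf T_{\Hilb n X}) = 1 + \varepsilon\bigl(\ch(\sheaf T_{\Hilb n X}) - 2n\bigr)$. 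Hence the left-hand side of Theorem~\ref{thm:tangent}, viewed in $\prod_{n\ge0}H^*(\Hilb n X, A[2])$, equals $\sum_n[\Hilb n X] + \varepsilon\sum_n\bigl(\ch(\sheaf T_{\Hilb n X}) - 2n\,[\Hilb n X]\bigr)$.

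Next I would substitute this $f$ into the right-hand side of Theorem~\ref{thm:tangent} and expand everything to first order in $\varepsilon$. One gets $f(x)f(-x) = 1 + \varepsilon(e^x + e^{-x} - 2)$, hence $G(x) = x - \varepsilon\,P(x)$ with $P(x) := x(e^x + e^{-x} - 2)$; since inverting a series of the shape $x - \varepsilon(\cdots)$ merely flips the sign to this order, $g(x) = x + \varepsilon\,P(x)$. From here $g'(x) = 1 + \varepsilon\,P'(x)$, $f(-g(x)) = 1 + \varepsilon(e^{-x} - 1)$ and $f\bigl(\pm(g(x) - g(y))\bigr) = 1 + \varepsilon(e^{\pm(x-y)} - 1)$, all modulo $\varepsilon^2$, and the two logarithms in the theorem become elementary. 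The outcome is that $a_1 = 1$ while all remaining coefficients lie in $\varepsilon A$; writing their $\varepsilon$-parts as $\alpha_k$, $\beta_k$, $\gamma_{k,l}$ (normalised as in Theorem~\ref{thm:tangent}) one finds
\[
  \sum_k k\,\alpha_k x^k = P(x),\qquad \sum_k\beta_k x^k = -(1 + x)\sinh x,
\]
and $\sum_{k,l}\gamma_{k,l}x^ky^l = \tfrac12\bigl(e^{x-y} + e^{y-x} - 2 - \tfrac{P(x) - P(y)}{x - y}\bigr)$, the last of which expands into the binomial sum $\sum_{m\ge1}\tfrac1{(2m)!}\sum_{k+l=2m}\bigl((-1)^k\binom{2m}{k} - 1\bigr)x^ky^l$.

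Finally I would extract the first-order part of the whole identity. Because the creation operators $q_m(\cdot)$ with $m > 0$ commute with one another, $\exp\bigl(q_1(1) + \varepsilon S\bigr)\vac = \exp(q_1(1))\vac + \varepsilon\,S\exp(q_1(1))\vac$, where $S := \sum_k\bigl(\alpha_kq_k(1) + \beta_kq_k(K_X)\bigr) + \sum_{k,l}\gamma_{k,l}q_{(k,l)}(1)$. Comparing the $\varepsilon^0$-parts is just Theorem~\ref{thm:tangent} for the trivial class and gives $\sum_n[\Hilb n X] = \exp(q_1(1))\vac$ (equivalently $\tfrac1{n!}q_1(1)^n\vac = [\Hilb n X]$); comparing the $\varepsilon^1$-parts gives $\sum_n\bigl(\ch(\sheaf T_{\Hilb n X}) - 2n\,[\Hilb n X]\bigr) = S\exp(q_1(1))\vac$. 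Adding back the rank contribution $\sum_n 2n\,[\Hilb n X] = 2\,q_1(1)\exp(q_1(1))\vac$ — which follows from $q_1(1)\exp(q_1(1))\vac = \sum_n n\,[\Hilb n X]$ — then produces the asserted closed formula, the only modification being that the coefficient of $q_1(1)$ is raised from $\alpha_1$ to $\alpha_1 + 2$. The one genuinely computational step is the middle one: propagating the expansions of $g$, $g'$, $f(-g(x))$ and of the two logarithms to first order in $\varepsilon$ and recognising the resulting rational generating functions; granted Theorem~\ref{thm:tangent}, everything else is formal bookkeeping.
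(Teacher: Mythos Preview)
Your approach is essentially identical to the paper's: both pass to the ring of dual numbers $\set Q[\epsilon]/(\epsilon^2)$, take the multiplicative class attached to $f(x)=1+\epsilon(e^x-1)$ so that $[\epsilon]\phi=\ch-\operatorname{rk}$, plug into Theorem~\ref{thm:tangent} (obtaining $g(x)=x+2\epsilon x(\cosh x-1)$ and the resulting generating series), extract the $\epsilon$-part, and finally add back the rank contribution $2n\cdot\frac{q_1(1)^n}{n!}\vac=2q_1(1)\exp(q_1(1))\vac$. Your write-up simply spells out a few intermediate expansions that the paper leaves implicit.
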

    
    \begin{proof}
        Set $\widetilde\ch := \ch - \ch_0$. Let $\phi$ be the
        multiplicative class with values in the ring $\set Q[\epsilon]$ of dual numbers that
        corresponds to the power series
        $f(x) := 1 + \epsilon (\exp x - 1)$. Then $\widetilde \ch = [\epsilon] \phi$.
        Here, we use the notation $[t^n] F(t)$ to denote the $t^n$-coefficient
        of a polynomial $F(t)$.
        
        For the choice of $\phi$ at hand,
        the power series $G$ is given by $G(x) = x - 2 \epsilon x (\cosh x - 1)$.
        Its compositional inverse is $g(x) = x + 2 \epsilon x (\cosh x - 1)$. 
        Write
        \[
            \sum_{n \ge 0} \phi(\sheaf T_{\Hilb n X}) = \exp\left(
                \sum_{k \ge 1} (\tilde a_k q_k(1) + \tilde b_k q_k (K_X))
                + \sum_{k, l \ge 1} \tilde a_{k, l} q_{(k, l})(1)\right) \vac.
        \]
        By Theorem~\ref{thm:tangent}, we have
        \begin{equation}
            \sum_{k \ge 1} k \tilde a_k x^k = x + 2 \epsilon x (\cosh x - 1),\quad
            \sum_{k \ge 1} \tilde b_k x^k = - \epsilon (1 + x) \sinh x
        \end{equation}
        and
        \[
            \sum_{k, l \ge 1} \tilde a_{k, l} x^k y^l
            = \epsilon \left(\cosh(x - y) - \frac{x \cosh x - y \cosh y}{x - y}\right).
        \]
        Because of $\widetilde \ch = [\epsilon] \phi$ and the fact that
        $(\ch - \widetilde \ch)(\sheaf T_{\Hilb n X}) = 2n \frac{q_1^n(1)}{n!} \vac$,
        the corollary follows.
    \end{proof}

    By the same methods we will use in the proof of the preceding theorem, one can also prove our second theorem,
    which is about the tautological bundles.
    
    Let $h \in xA[[x]]$ be the compositional inverse of the power series $H := \frac x {f(-x)}$.
    \begin{theorem}
        Let $X$ be a non-compact simply-connected surface and $\sheaf F$ a line bundle on $X$ with first Chern class
        $F$. 
        The multiplicative class $\phi$ evaluated on the tautological bundles $\Hilb n {\sheaf F}$
        of the Hilbert schemes of points on $X$ is given by
        \[
            \sum_{n \ge 0} \phi(\Hilb n {\sheaf F}) = \exp\left(
              \sum_{k \ge 1} (a_k q_k(1) + b_k q_k(K_X) + c_k q_k(F))
              + \sum_{k, l \ge 1} a_{k, l} q_{(k, l)}(1)
            \right) \vac,
        \]
        where the $A$-valued sequences $(a_k)_k$, $(b_k)_k$, $(c_k)_k$, and $(a_{k, l})_{k, l}$
        are defined by
        \[
            \sum_{k \ge 1} k a_k x^k = h(x),\quad \sum_{k \ge 1} b_k = \frac 1 2 \log \frac{h^2(x)}{x^2 h'(x)},
            \quad
            c_k = \log \frac x {h(x)}
        \]
        and
        \[
            \sum_{k, l \ge 1} a_{k, l} x^k y^l = \frac 1 2 \log
            \frac{(x - y)h(x) h(y)}{x y (h(x) - h(y))}.
        \]
    \end{theorem}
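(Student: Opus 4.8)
The plan is to re-run the argument that proves Theorem~\ref{thm:tangent} (carried out in the next section) with the tangent bundle $\sheaf T_{\Hilb n X}$ replaced throughout by the tautological bundle $\Hilb n {\sheaf F}$, keeping track of how the relevant Chern roots change. There are three stages. First, the universality theorems of Boissi\`ere and Nieper-Wi\ss kirchen, in their form for tautological sheaves, guarantee that $\sum_{n\ge 0}\phi(\Hilb n {\sheaf F})$ has an expansion $\exp\bigl(\sum_\lambda(\cdots)\,q_\lambda(\cdots)\bigr)\vac$ with \emph{universal} coefficient series, the classes inserted ranging over $1$, $K_X$, $F$, $e_X$, $K_X^2$, $K_XF$, $F^2$ and their diagonal push-forwards. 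Second, one cuts this down to the asserted shape using the cohomological poverty of a non-compact simply-connected surface. Third, one pins down the four remaining series $(a_k),(b_k),(c_k),(a_{k,l})$ by an equivariant localisation computation on suitable non-compact toric model surfaces.

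For the shape: on a non-compact simply-connected surface $X$ one has $H^1(X,\set Q)=H^3(X,\set Q)=H^4(X,\set Q)=0$, so $H^*(X,\set Q)=\set Q\oplus H^2(X,\set Q)$ and any two classes of positive degree multiply to zero; in particular $e_X$, $K_X^2$, $K_XF$, $F^2$ all vanish. By degree reasons the Gysin push-forward $\delta^r_!(1)$ along the $r$-fold diagonal $\delta^r\colon X\to X^r$ is non-zero only for $r\le 2$, with $\delta^2_!(1)\in H^2(X,\set Q)\otimes H^2(X,\set Q)$, and the projection formula gives $\delta^2_!(K_X)=(K_X\otimes 1)\smile\delta^2_!(1)\in(H^2\smile H^2)\otimes H^2=0$ and likewise $\delta^2_!(F)=0$. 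Hence the only partitions that survive are the single box, inserted against $1$, $K_X$ and $F$, and the pairs $(k,l)$, inserted against $1$ — exactly the shape of the statement, reached by the same mechanism as in Theorem~\ref{thm:tangent}.

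Since the four series are universal they may be computed on whatever non-compact simply-connected surfaces are convenient. On the affine plane $\set A^2$, where $K_X=0$, $F=0$ and $q_{(k,l)}(1)=0$, the formula collapses to $\sum_n\phi(\Hilb n {\sheaf O})=\exp\bigl(\sum_k a_kq_k(1)\bigr)\vac$, which I would evaluate by Atiyah--Bott localisation for the coordinate torus: the fixed points are the monomial ideals $I_\mu$, the tautological fibre at $I_\mu$ is $\set C[x,y]/I_\mu$ with torus weights the boxes of $\mu$, and the tangent space of the Hilbert scheme at $I_\mu$ carries the classical arm--leg weights. Specialising to a one-dimensional subtorus turns $\phi(\Hilb n {\sheaf O})$ and the normal Euler classes into one-variable products over boxes, and resumming over all $n$ and all $\mu$ — this is where Lagrange inversion enters — identifies $\sum_k ka_kx^k$ with the compositional inverse $h$ of $H=x/f(-x)$. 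The single factor $f(-x)$ here, against $f(x)f(-x)$ for the tangent bundle, is precisely the difference between the ``one-sided'' box weights of the tautological fibre and the ``two-sided'' arm--leg weights of the tangent space, and it propagates through the whole argument, turning every $f(\pm g)$ of Theorem~\ref{thm:tangent} into an $f(\pm h)$. For $(b_k)$ and $(a_{k,l})$ I would repeat the localisation on the total spaces $\sheaf O_{\set P^1}(-m)$ for varying $m$: the torus-fixed subschemes are supported at the two fixed points of the base, the contribution attached to a single fixed point yields, after the same inversion, the square-root factor $\tfrac12\log(h^2/x^2h')$ (mirroring $\tfrac12\log(f^2(-g)/g')$), and the contribution of the invariant $\set P^1$ joining the two fixed points — where $\delta^2_!(1)\in H^2\otimes H^2$ enters — yields, after a two-variable version of the inversion and symmetrisation in $x\leftrightarrow y$, the series $(a_{k,l})$. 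Finally, running the computation with the line bundles $\sheaf O(d)$ shifts each box weight by $F$ and contributes, to first order, the extra factor $\exp\bigl(\sum_k c_kq_k(F)\bigr)$ with the series $(c_k)$ of the statement; varying $m$ and $d$ separates $(b_k)$ from $(c_k)$, and the $n=1$ case $\phi(\Hilb 1 {\sheaf F})=f(F)$ provides a sanity check.

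The real obstacle is the last stage in its honest form: showing that the products over boxes produced by localisation genuinely resum to the claimed closed exponential, i.e.\ the Lagrange-inversion identity yielding $h$ and, more delicately, its two-variable analogue governing $(a_{k,l})$, together with the bookkeeping of the $\tfrac12\log$ square-root factors and the verification that the answer is independent of the chosen toric model and subtorus. All of this is shared with the proof of Theorem~\ref{thm:tangent}; once the $\set A^2$ computation and the shape reduction are in place, passing from the tangent bundle to the tautological bundle is just a matter of substituting $f(-x)$ for $f(x)f(-x)$ and inserting the $F$-twist, which is why the theorem follows ``by the same methods''.
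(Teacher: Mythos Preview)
Your proposal is essentially correct and matches what the paper intends: the paper omits the proof entirely, saying only that it ``is very similar to the one of Theorem~\ref{thm:tangent}'', and your sketch is precisely a re-run of that argument with the arm--leg tangent weights replaced by the one-sided box weights of the tautological fibre, plus the extra bookkeeping for the $F$-twist.

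Two small deviations from the paper's actual route for Theorem~\ref{thm:tangent} are worth flagging. First, the paper does not use $\set A^2$ at all: the series $(a_k)$ is read off directly from the degree-$(n-1)$ component $\phi_{n-1}(\sheaf T_{\Hilb n X})$ and cited from \cite{boissiere-nieper:universal}, while \emph{all} remaining coefficients are extracted from a single family, the total spaces of $\sheaf O_{\set P^1}(-\gamma)$, via the Li--Qin--Wang identification of the fixed-point classes with Jack polynomials $P_\lambda^{(\gamma-1)^{-1}}$; the two-variable map $\rho$ to $A[x,y]$ then produces a generating function $Z_\gamma(x,y)$, and $(b_k)$, $(a_{k,l})$ are obtained from $Z_2$ and $Z_3$. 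For the tautological theorem one additionally varies the line bundle on these same surfaces to separate $(c_k)$ from $(b_k)$, exactly as you say. Second, the ``two-variable Lagrange inversion'' you allude to is, in the paper, the Lagrange--Good formula applied inside the proofs of Lemmas~\ref{lem:z2} and~\ref{lem:z3}, together with the reduction Lemma~\ref{lem:dots}; the tautological analogue requires the identical machinery with $F(x)=f(x)f(-x)$ replaced by $f(-x)$, which is precisely the substitution you identified.
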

    The proof of this Theorem is omitted as it is very similar to the one of Theorem~\ref{thm:tangent}.

    \begin{remark}
    	In fact, our theorems hold for all surfaces for which $H^4(X, \set Q) = 0$, and for which $H^1(X, \set Q) = 0$
    	or $H^3(X, \set Q) = 0$. In particular, this condition is fulfilled for $X$ being non-compact
    	and simply-connected.
    \end{remark}
    
    \section{The proof}

    By our assumptions on $X$, it is $e_X = 0 = K_X^2$. Furthermore,
    it is $\delta^r_! 1 = 0$ for $r \ge 3$ and $\delta^r_! K_X = 0$ for $r \ge 2$.
    The general formula~\eqref{eq:universal_formula} thus
    specialises to the following:
    There are unique $A$-valued sequences $(a_k)_k$, $(b_k)_k$, and $(a_{k, l})_{k, l}$
    with $a_{k, l} = a_{l, k}$ such that
    \[
        \sum_{n \ge 0} \phi(\sheaf T_{\Hilb n X}) = \exp\left(
            \sum_{k \ge 1} (a_k q_k(1) + b_k q_k(K_X))
            + \sum_{k, l \ge 1} a_{k, l} q_{(k, l)}(1)
        \right) \vac
    \]
    for all non-compact simply-connected surfaces $X$.
    (The uniqueness can be proven as in the general case;
    see~\cite{boissiere-nieper:universal}.)
    
    By degree reasons, it follows that
    \[
        \sum_{n \ge 0} \phi_{n - 1}(\sheaf T_{\Hilb n X}) = \left(\sum_{k \ge 1} a_k q_k(1)\right)
        \vac
    \]
    and
    \begin{equation}
    	\label{eq:main}
        \sum_{n \ge 0} \phi_{n}(T_{\Hilb n X}) = \exp\left(
            \sum_{k \ge 1} b_k q_k(K_X)
            + \sum_{k, l \ge 1} a_{k, l} q_{(k, l)}(1)
        \right) \vac
   \end{equation} 
    where $\phi_n$ denotes the component of $\phi$ of cohomological degree $2n$. The results
    in~\cite{boissiere-nieper:universal} already give the claimed values of the $a_k$
    in Theorem~\ref{thm:tangent}.
    
    It remains to prove the claims about the values of the $b_k$ and the $a_{k, l}$.
    
    The idea of the proof
    is to explicitely calculate the left hand side of~\eqref{eq:main}
    when $X$ is the total space
    of a line bundle $\sheaf O_{\set P^1}(-\gamma)$ over $\set P^1$ by means of equivariant
    cohomology. Here, $\gamma > 1$. By comparing coefficients, we will be able to
    deduce the generating series for the $a_k$, $b_k$, and $a_{k, l}$.
    
    We note the following cohomological facts about $X$:
    The cohomological fundamental class $h$ of a fibre
    of the line bundle $\sheaf O_{\set P^1}(-\gamma)$
    spans $H^2(X, \set Q)$. The canonical class is given by
    $K_X = (\gamma - 2) h$ and it is $\delta^2_! 1 = - \gamma (h \otimes h)$.
    
    On $X$, we fix the $\set C^\units$-action that is described in~\cite{li-qin-wang:jack}.
    It has two isolated fixpoints. This $\set C^\units$-action induces a
    $\set C^\units$-action on the Hilbert scheme $\Hilb n X$, also with isolated fixpoints.
    To each pair $(\lambda^0, \lambda^1)$ of partitions with $\abs \lambda^0 + \abs \lambda^1 = n$, i.e.~to each
    bipartition of $n$, corresponds exactly one fixpoint $\xi_{\lambda^0, \lambda^1}$ (\cite{li-qin-wang:jack}).
    
    The torus $\set C^\units$ acts on the tangent space at the fix points
    $\xi_{\lambda^0, \lambda^1}$ with weights $W_{\lambda^0}(-1, -1)$
    and $W_{\lambda^1}(\gamma - 1, 1)$, where $W_\lambda(\alpha, \beta)$ is a multiset 
    defined by
    \[
        W_\lambda(\alpha, \beta) := \Set{
            \alpha l(w) + \beta (a(w) + 1), - \alpha (l(w) + 1) - \beta a(w) \mid w \in D_\lambda
        }
    \]
    for a partition $\lambda$. In the definition, $D_\lambda$ denotes the Young diagram of $\lambda$,
    and $l(w)$ ($a(w)$) denotes the leg (arm) length of a cell $w$ in the Young diagram. For the notion of
    leg and arm length, see~\cite{macdonald}; the description of the weights is
    from~\cite{li-qin-wang:jack}.
    
    As $\Hilb n X$ has no cohomology in odd degrees and is thus equivariantly formal,
    we can apply the localisation formula in equivariant
    cohomology (\cite{atiyah-bott}, \cite{edidin-graham}).
    For the equivariant characteristic class $\phi^{\set C^\units}$,
    the localisation formula gives
    \begin{equation}
        \label{eq:localisation}
        u^n \phi^{\set C^\units}_n(\sheaf T_{\Hilb n X})
        = \sum_{\substack{\lambda^0, \lambda^1
            \\ \abs{\lambda^0} + \abs{\lambda^1} = n}}
            [\xi_{\lambda^0, \lambda^1}]_{\set C^\units}
            [u^n]
            \prod_{w \in W_{\lambda^0} \amalg W_{\lambda^1}} \frac {f(w u)}{w}    	
    \end{equation}
    in $H^*_{\set C^\units}(\Hilb n X, \set Q)$, which is a $H^*(B \set C^\units, \set Q) = \set Q[u]$-module.
    Here $[\xi_{\lambda^0, \lambda^1}]_{\set C^\units}$ is the
    equivariant cohomological fundamental class of the fixpoint $\xi_{\lambda^0, \lambda^1}$.
    
    In~\cite{li-qin-wang:jack}, the authors introduce equivariant cohomology classes
    $[\lambda^0, \lambda^1] \in H^{2n}_{\set C}(\Hilb n X, \set Q)$ for which
    \[
        \frac{u^n [\lambda^0, \lambda^1]}{c'_{\lambda^0}(-1, -1) c'_{\lambda^1}(\gamma - 1, 1)}
        = \frac{[\xi_{\lambda^0, \lambda^1}]_{\set C^\units}}
        {\prod_{w \in W_{\lambda^0} \amalg W_{\lambda^1}} w}
    \]
    holds. Here $c'_{\lambda}(\alpha, \beta) := \prod_{w \in D_\lambda}(\alpha l(w) + \beta (a(w) + 1)$ for
    a partition $\lambda$.
    
    Let $\Lambda$ be the ring of symmetric functions over the rationals in the variables $x_1, x_2, \ldots$.
    The power symmetric functions are denoted by $p_n := \sum_i x_i^n$.
    Let $\psi\colon \Lambda \otimes \Lambda \to \bigoplus_{n \ge 0} H^*(\Hilb n X, \set Q)$
    be the linear map that maps $(p_{\lambda_1} \cdots p_{\lambda_r})
    \otimes (p_{\mu_1} \cdots p_{\mu_s})$ to $q_{\lambda_1}(h) \cdots q_{\lambda_r}(h)
    q_{\mu_1}(h) \dots q_{\mu_s}(h) \vac$.
    
    Let us denote by $j^*\colon H^*_{\set C^\units}(\Hilb n X, \set Q) \to H^*(\Hilb n X, \set Q)$
    the natural map from equivariant to ordinary cohomology. In~\cite{li-qin-wang:jack} it is
    proven that
    $j^*([\lambda^0, \lambda^1]) = \psi(P^{1}_{\lambda^0}
    \otimes P^{(\gamma - 1)^{-1}}_{\lambda^1})$. Here
    $P^{\alpha}_\lambda \in \Lambda$ is the Jack polynomial to the parameter
    $\alpha$ (\cite{macdonald}).
    
    As $j^*$ maps the equivariant characteristic class $\phi^{\set C^\units}$
    to the ordinary characteristic class $\phi$,equation~\eqref{eq:localisation} thus yields
    \begin{equation}
    	\label{eq:phi_n}
        \phi_n(\sheaf T_{\Hilb n X}) = \sum_{\substack{\lambda^0, \lambda^1
            \\ \abs{\lambda^0} + \abs{\lambda^1} = n}}
            \psi(P^{1}_{\lambda^0}
                \otimes P^{(\gamma - 1)^{-1}}_{\lambda^1})
            \frac{[u^n] \prod_{w \in W_{\lambda^0} \amalg W_{\lambda^1}} f(wu)}
            {c'_{\lambda^0}(-1, -1) c'_{\lambda^1}(\gamma - 1, 1)}
    \end{equation}
    
    Let us define a linear map $\rho\colon H^{2n}(\Hilb n X, \set Q) \to A[x, y]$ that
    maps $q_{\lambda_1}(h) \dots q_{\lambda_r}(h)\vac$ to
    $(x^{\lambda_1} + y^{\lambda_1}) \cdots (x^{\lambda_r} + y^{\lambda_r})$ for each partition
    $(\lambda_1, \ldots, \lambda_r)$ of $n$.
    
    For an element $f_0 \otimes f_1 \in \Lambda \otimes \Lambda$ of total degree $n$, we have
    $\rho(\psi(f_0 \otimes f_1)) = f_0(x, y) f_1(x, y)$. Here, for a
    symmetric function $f \in \Lambda$, the expression $f(x, y)$ means to substitute $x_1$ by $x$,
    $x_2$ by $y$ and $x_i$ for $i \ge 3$ by $0$.
    
    We apply the map $\rho$ on both sides of~\eqref{eq:phi_n}. In view of~\eqref{eq:main}, this yields
    \begin{equation}
        \label{eq:def_w}
        \begin{aligned}
        \sum_{n \ge 0} \rho(\phi_n(T_{\Hilb n X}))
        & = \exp\left(\sum_{k \ge 1} (\gamma - 2) b_k (x^k + y^k) - \gamma \sum_{k, l \ge 1} a_{k, l}
            (x^k + y^k) (x^l + y^l)\right)
        \\
        & = \sum_{\lambda^0, \lambda^1}
           P^1_{\lambda^0}(x, y)
               \otimes P^{(\gamma - 1)^{-1}}_{\lambda^1}(x, y)
           \frac{[u^{\abs{\lambda^0} + \abs{\lambda^1}}]
           \prod_{w \in W_{\lambda^0} \amalg W_{\lambda^1}} f(wu)}
           {c'_{\lambda^0}(-1, -1) c'_{\lambda^1}(\gamma - 1, 1)}
        \\
        & =: Z_\gamma(x, y)        	
        \end{aligned}
    \end{equation}
    
    From this, we can read off
    \begin{align*}
    	\sum_{k, l \ge 1} a_{k, l} x^k y^l
    	& = - \frac 1 4 (\log Z_2(x, y) - \log Z_2(x, 0) - \log Z_2(0, y)),
        \\
        \intertext{and}
    	\sum_{k \ge 1} b_k x^k & = \log Z_3(x, 0) - \frac 3 2 \log Z_2(x, 0).
    \end{align*}

    By Lemma~\ref{lem:z2} and Lemma~\ref{lem:z3}, Theorem~\ref{thm:tangent} is proven.
    
    \section{Two combinatorial equalities}
    
    The following
    two Lemmas give the values for the power series $Z_2(x, y)$ and $Z_3(x, 0)$. Both were needed in the proof
    of Theorem~\ref{thm:tangent} in the previous section.
    
    \begin{lemma}
        \label{lem:z2}
        The value of $Z_2(x, y)$ in $A[[x, y]]$ is given by
    	\begin{equation*}
    		\begin{aligned}
        	    Z_2(x, y) & = g'(x) g'(y) \left(\frac{G(g(x) - g(y)))}{x - y}\right)^2
        	    \\
        	    & = g'(x) g'(y) \left(\frac{g(x) - g(y)}{(x - y)f(g(x) - g(y)) f(g(y) - g(x))}\right)^2.			
    		\end{aligned}
    	\end{equation*}
    \end{lemma}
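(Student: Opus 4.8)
The plan is to compute $Z_2(x,y)$ directly from the definition in~\eqref{eq:def_w} by exploiting the special structure that the variable count imposes. The key observation is that for $\gamma = 2$, substituting only two variables $x_1 = x$, $x_2 = y$ and setting all others to zero kills every Jack polynomial $P^\alpha_\lambda$ whose indexing partition $\lambda$ has more than two parts. Therefore the double sum over bipartitions $(\lambda^0,\lambda^1)$ collapses to a sum over pairs of partitions each of length at most $2$. Writing $\lambda^i = (r_i, s_i)$ with $r_i \ge s_i \ge 0$, the four-fold sum becomes a sum over the four nonnegative integers $r_0, s_0, r_1, s_1$, and the plan is to recognize the resulting series as a product of two single-variable generating functions via the compositional inverse $g$.

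\textbf{Step 1.} Specialize the combinatorial data at $\gamma = 2$. The weight multiset of a partition $\lambda = (r,s)$ is computed explicitly from $W_\lambda(\alpha,\beta)$ by listing arm and leg lengths cell by cell; for the two relevant specializations $W_{\lambda^0}(-1,-1)$ and $W_{\lambda^1}(1,1)$ (recall $\gamma - 1 = 1$) the weights become simple linear expressions in $r_0, s_0$ resp.\ $r_1, s_1$. Similarly $c'_{(r,s)}(-1,-1)$ and $c'_{(r,s)}(1,1)$ are explicit products, and the two-variable Jack polynomials $P^1_{(r,s)}(x,y)$ (Schur case, $\alpha=1$) and $P^1_{(r,s)}(x,y)$ again (since $(\gamma-1)^{-1} = 1$) are Schur polynomials in $x,y$, hence quotients of Vandermonde-type determinants.

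\textbf{Step 2.} Assemble the sum. After Step~1 the right-hand side of~\eqref{eq:def_w} at $\gamma = 2$ is a completely explicit four-fold power series in $x, y$ (with the $[u^n]$ extraction absorbed by rescaling $u$). The plan is to factor this as a product over the two ``colours'' $i = 0, 1$, and within each colour to split the $(r,s)$-sum using the Schur determinant. One then recognizes each colour's contribution as built from the single-variable series $\sum_k k a_k x^k = g(x)$ and its companion: concretely, the elementary generating identity $\sum_{r \ge 0} \bigl(\prod_{w} f(\cdot)\bigr)/c'(\cdot)\, x^r$ for a \emph{one-row} partition is exactly the Lagrange-inversion statement that reconstructs $g$ from $G = x/(f(x)f(-x))$; the two-row correction contributes the Vandermonde ratio $(g(x)-g(y))/(x-y)$ together with the derivative factors $g'(x)g'(y)$ coming from differentiating the Lagrange inversion formula.

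\textbf{Step 3.} Match with the claimed closed form. Once the sum is rewritten as $g'(x)g'(y)\bigl((g(x)-g(y))/((x-y)\,\text{something})\bigr)^2$, the ``something'' is identified as $f(g(x)-g(y))f(g(y)-g(x))$ by the defining relation $G(z) = z/(f(z)f(-z))$ evaluated at $z = g(x) - g(y)$, which gives the first displayed equality $Z_2 = g'(x)g'(y)(G(g(x)-g(y))/(x-y))^2$ and then the second. \textbf{The main obstacle} I expect is Step~2: correctly bookkeeping the Lagrange inversion so that the one-row sums produce precisely $g$ and its derivative, and verifying that the two-row terms contribute exactly the squared Vandermonde ratio with no leftover factors. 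This is essentially a careful application of the (multivariate) Lagrange–Good inversion formula to the generating function $G$, and the delicate point is tracking the Jacobian factors $g'(x), g'(y)$ and the sign conventions in the weight sets $W_\lambda(-1,-1)$ versus $W_\lambda(1,1)$; everything else is routine power-series manipulation.
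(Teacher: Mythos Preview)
Your Step~1 and the endpoint of Step~3 are correct and match the paper: at $\gamma=2$ both Jack parameters equal $1$, the Jack polynomials are Schur polynomials, only partitions of length $\le 2$ survive the two-variable specialisation, and the final identification does go through the bivariate Lagrange--Good formula applied to $G$.

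The gap is in Step~2. Your plan to ``factor this as a product over the two colours $i=0,1$'' does not work: the coefficient extraction $[u^{\abs{\lambda^0}+\abs{\lambda^1}}]$ couples the two partitions, so the sum over $(\lambda^0,\lambda^1)$ is \emph{not} a product of a sum over $\lambda^0$ times a sum over $\lambda^1$. The squared structure of the answer is real, but it does not come from a colour factorisation; it comes from the symmetry $W_\lambda(-1,-1)=W_\lambda(1,1)$ together with a different reorganisation of the sum. What the paper actually does is: write the four-fold sum over $(a,b),(c,d)$ with $a\ge b$, $c\ge d$, use the Schur determinant to symmetrise in $(a,b)$ and in $(c,d)$ separately, and then change variables to $r=a+c$, $s=b+d$ so that the sum is indexed by the exponents of $x,y$. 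After this, the inner sum over $a$ (with $r$ fixed) and over $b$ (with $s$ fixed) still contains the awkward product $\prod_{w=a-r}^{a}F(wu)\prod_{w=b-s}^{b}F(wu)$ with $F(x)=f(x)f(-x)$.

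The step you are missing entirely is the mechanism that turns this product into something Lagrange-invertible. The paper uses a separate lemma: for any $f\in 1+xA[[x]]$ one has $\prod_{w=a-n}^{a}f(wu)=f^{n+1}(au)+X(a,u)$ where every monomial of $X$ has $a$-degree strictly smaller than its $u$-degree. Combined with the elementary identity $\sum_{a=0}^{r}(-1)^a a^k/(a!(r-a)!)=0$ for $k<r$ and $(-1)^r$ for $k=r$, the error term $X$ is annihilated by the alternating inner sum, and only the contribution $F^{r+1}(au)F^{s+1}(bu)$ survives. It is \emph{this} residue that one then recognises, via Lagrange--Good, as $g'(x)g'(y)\bigl(G(g(x)-g(y))/(x-y)\bigr)^2$. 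Your proposal never isolates this approximation-plus-cancellation step, and the heuristic ``one-row sums give $g$, two-row correction gives the Vandermonde ratio'' does not by itself account for the cross-term $G(g(x)-g(y))$, which genuinely mixes the $x$- and $y$-variables and cannot arise from a colour-wise product.
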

    
    \begin{proof}
    	For $\gamma = 2$, the Jack polynomials $P_\lambda^{(\gamma - 1)^{-1}} = P^1_\lambda$
    	are exactly the Schur polynomials $s_\lambda$ (\cite{macdonald}). It is
    	$s_{\lambda}(x, y) = 0$ for any partition $\lambda$ of length greater than two
    	and $s_{(a, b)}(x, y) = \frac{x^{a + 1} y^b - y^{a + 1} x^b}{x - y}$
        for $a \ge b \ge 0$.
    	In particular only the summands corresponding to partitions of length two or less contribute to the
    	sum $Z_2(x, y)$. For these partitions, we have
        \[
    	    W_{(a, b)}(1, 1)
    	    = W_{(a, b)}(-1, -1)
    	    = \Set{
    	        \pm 1, \ldots, \pm b, \pm 1, \ldots, \widehat{\pm (a - b + 1)}, \ldots,
    	        \pm (a + 1)
    	    },
    	\]
    	$c'_{(a, b)}(1, 1) = \frac{(a + 1)! b!}{a - b + 1}$,
    	and $c_{(a, b)}(-1, -1) = (-1)^{a + b} c'_{(a, b)}(1, 1)$.
    	Set $F(x) := f(x) f(-x)$. The defining equation~\eqref{eq:def_w} takes on the following form:	
    	\begin{equation*}
    	    \begin{aligned}
        	    Z_2(x, y) & = \frac 1 {(x - y)^2}
        	    \sum_{\substack{a \ge b \ge 0 \\ c \ge d \ge 0}}^\infty
        	    (-1)^{a + b} \frac{(a - b + 1)(c - d + 1)}{(a + 1)!b!(c + 1)!d!}
        	    \\
        	    & \cdot (x^{a + 1} y^b - y^{a + 1} x^b)(x^{c + 1}y^d - y^{c + 1} x^d)
        	    \\
        	    & \cdot [u^{a + b + c + d}] \frac{
        		    \prod_{w = -b}^{a + 1} F(wu) \prod_{w = -d}^{c + 1} F(wu)}
        		  {F((a - b + 1) u)F((c - d) + 1)u)}
        		\\
        		& = - \frac 1 {(x - y)^2}
        	    \sum_{\substack{a > b \ge 0 \\ c > d \ge 0}}^\infty
        	    (-1)^{a + b} \frac{(a - b)(c - d)}{a!b!c!d!}
        	    \cdot (x^{a} y^b - y^{a} x^b)(x^{c}y^d - y^{c} x^d)
        	    \\
        	    & \cdot [u^{a + b + c + d - 2}] \frac{
        		    \prod_{w = -b}^{a} F(wu) \prod_{w = -d}^{c} F(wu)}
        		  {F((a - b) u)F((c - d))u)}
        		\\
        		& = - \frac 1{(x - y)^2} \sum_{\substack{a, b \ge 0 \\ c, d \ge 0}}^\infty
        	    (-1)^{a + b} \frac{(a - b)(c - d)}{a!b!c!d!}
        	    \cdot x^{a + c} y^{b + d}
        	    \\
        	    & \cdot [u^{a + b + c + d - 2}] \frac{
        		    \prod_{w = -b}^{a} F(wu) \prod_{w = -d}^{c} F(wu)}
        		  {F((a - b) u)F((c - d))u)}
        	\end{aligned}
        \end{equation*}
        (The first equality is a simple index shift for the summation variables $a$ and $c$. The second equality
        has been obtained by writing the term symmetrically in $a$ and $b$, and $c$ and $d$.)
        We introduce new summation variables $r = a + c$ and $s = b + d$. This and the fact that $F$ is an even
        power series yields
        \begin{equation*}
        	\begin{aligned}
        	    Z_2(x, y)
        		& = - \frac 1{(x - y)^2} \sum_{r, s \ge 0} x^r y^s [u^{r + s}]
        		\sum_{a = 0}^r \sum_{b = 0}^s (-1)^{a + b}
        		\frac{u^2 (a - b)(r + s - (a - b))}{a! b! (r - a)! (s - b)!}
        		\\
        		& \cdot \underbrace{\frac{\prod_{w = a - r}^a F(wu) \prod_{w = b - s}^b F(wu)}
        		{F((a - b) u) F((r + s - (a + b)) u)}}_{(*)}.
    	    \end{aligned}  
    	\end{equation*}
    	By Lemma~\ref{lem:dots}, stated below, the term $(*)$ is equal to
    	$\frac{F^{r + 1}(a u) F^{s + 1}(b u)}{F((a - b) u) F((b - a) u)} + X(u, a, b)$, where
        $X(u, a, b) \in u A[[a u, b u, u]]$. As
        \begin{equation}
            \label{eq:cases}
            \sum_{a = 0}^r \frac{(-1)^a a^k}{a! (r - a)!} = \begin{cases}
            	0 & \text{for $k < r$ and} \\
            	(-1)^r & \text{for $k = r$}
            \end{cases}        	
        \end{equation}
        (\cite{boissiere-nieper:universal}), the term $X(u, a, b)$ cannot contribute and
        we thus have
        \begin{equation*}
            Z_2(x, y) = - \frac 1 {(x - y)^2} \sum_{r + s \ge 0} (-1)^{r + s} x^r y^s
            [a^r b^s] \frac{F^{r + 1}(a) F^{s + 1}(b) (a - b) (b - a)}{F(a - b) F(b - a)}.
        \end{equation*}
        Recall that we defined $G(z) = \frac z {F(z)}$. With this definition, we have
        \[
            Z_2(x, y) = - \frac 1 {(x - y)^2} \sum_{r, s = 0}^\infty (-x)^r (-y)^s
            \res_{(a, b)} \frac{G(a - b) G(b - a)}{G^{r + 1}(a) G^{s + 1}}.
        \]
        By the Lagrange--Good formula (\cite{krattenthaler}), this is equivalent to
        \[
            Z_2(-G(x), -G(y)) = \frac 1 {G'(x) G'(y)} \frac{G(x - y)}{G(x) - G(y)}
            \frac{G(y - x)}{G(y) - G(x)}.
        \]
        As $G$ is an odd power series, and $g$ is defined as the inverse power series of $G$,
        the claim of the Lemma follows.
    \end{proof}

    \begin{lemma}
        \label{lem:z3}
        The value of $Z_3(x, 0)$ in $A[[x]]$ is given by
    	\[
    	    Z_3(x, 0) = f(-g(x)) g'(x).
    	\]
    \end{lemma}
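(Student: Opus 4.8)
The plan is to mimic the structure of the proof of Lemma~\ref{lem:z2}, but now with only one variable active. Setting $y = 0$ in the definition~\eqref{eq:def_w} kills every monomial in the Jack-polynomial factor $P^1_{\lambda^0}(x,0) \otimes P^{(\gamma-1)^{-1}}_{\lambda^1}(x,0)$ unless both partitions $\lambda^0$ and $\lambda^1$ have length at most one, since $P_\lambda(x,0) = 0$ whenever $\length(\lambda) \ge 2$. Writing $\lambda^0 = (a)$ and $\lambda^1 = (b)$ with $a, b \ge 0$, the single-row Jack polynomials satisfy $P^\alpha_{(a)} = p_1^a/\text{(something)}$; more usefully, $P^1_{(a)}(x,0) = x^a$ and $P^{(\gamma-1)^{-1}}_{(a)}(x,0) = x^a$ as well (the normalization of $P_{(a)}$ makes its leading coefficient $1$, so on a single variable it is just $x^a$). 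So only the ``diagonal'' contributions survive and $Z_3(x,0)$ becomes a single sum over $a, b \ge 0$ with $a + b = n$.

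First I would write out, for $\gamma = 3$, the weight multisets $W_{(a)}(-1,-1)$ and $W_{(b)}(2,1)$ explicitly from the defining formula for $W_\lambda(\alpha,\beta)$: for a single row $(a)$, the cells $w$ have $l(w) = 0$ and $a(w)$ ranging over $0, 1, \dots, a-1$, so $W_{(a)}(-1,-1) = \Set{-(a(w)+1), a(w) \mid 0 \le a(w) \le a-1} = \Set{-1,\dots,-a,\ 0,\dots,a-1}$, i.e. the integers from $-a$ to $a-1$; likewise $W_{(b)}(2,1) = \Set{a(w)+1,\ -(2 + a(w)) \mid 0 \le a(w) \le b-1}$, which is $\Set{1,\dots,b}$ together with $\Set{-2,\dots,-(b+1)}$. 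Next I would compute the normalization constants $c'_{(a)}(-1,-1) = \prod_{w}(-(a(w)+1)) = (-1)^a a!$ and $c'_{(b)}(2,1) = \prod_w(a(w)+1) = b!$. Plugging these into~\eqref{eq:def_w} with $y=0$ gives
\[
    Z_3(x, 0) = \sum_{a, b \ge 0} \frac{(-1)^a}{a!\,b!}\, x^{a+b}\,
    [u^{a+b}] \prod_{w = -a}^{a-1} f(wu) \prod_{w \in \{1,\dots,b\} \cup \{-2,\dots,-(b+1)\}} f(wu).
\]
The product over $W_{(a)}(-1,-1)$ is $\prod_{w=-a}^{a-1} f(wu)$, which I would rewrite as $f(-au)\,\prod_{w=-(a-1)}^{a-1} f(wu) = f(-au)\prod_{w=1}^{a-1} F(wu)$ using $F(z)=f(z)f(-z)$ and $f(0)=1$; similarly the product over $W_{(b)}(2,1)$ telescopes, after pairing $w$ with $-w$ where possible, into something of the form $f(-bu)f(-(b+1)u)^{?}\prod F(wu)$ — here I need to be careful with the asymmetric ranges $\{1,\dots,b\}$ versus $\{-2,\dots,-(b+1)\}$, which pair up as $F(2u)\cdots F(bu)$ times the two leftover factors $f(u)$ and $f(-(b+1)u)$.

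The main obstacle will be the same kind of ``negligible tail'' argument used in Lemma~\ref{lem:z2}: after expressing the $u$-coefficient as a product of an ``interesting'' part of the form $F^{n}(au)\cdots$ (a power of $F$ evaluated at integer multiples of $u$, controlled by Lemma~\ref{lem:dots}) plus a remainder $X(u,a,b) \in uA[[au,bu,u]]$, I invoke the combinatorial identity~\eqref{eq:cases}, $\sum_{a=0}^r \frac{(-1)^a a^k}{a!(r-a)!}$ vanishes for $k<r$, to kill the remainder — but here the sum is genuinely two-indexed without a constraint $a+b$ fixed separately from the $u$-degree, so I would first reorganize the double sum by introducing $n = a+b$ and checking that for each fixed $n$ only the top-degree term in the Laurent expansion contributes. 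Once the remainder is discarded, the sum collapses (via the single-variable Lagrange inversion / residue formula, i.e. the Lagrange--Good formula~\cite{krattenthaler} in one variable, together with $H(x) = x/f(-x)$ and $h$ its compositional inverse) to a closed form; I then match it against $G$, $g$, and the relation $f(-g(x))\,g'(x)$ by using $G(z) = z/F(z)$, $F(z) = f(z)f(-z)$, and the chain rule $g'(x) = 1/G'(g(x))$. I expect the bookkeeping with the two leftover asymmetric factors $f(u)$ and $f(-(b+1)u)$ in the $\gamma=3$ weight set to be the fiddly part, but it should reduce, after the residue computation, exactly to the factor $f(-g(x))$ in the statement, with $g'(x)$ emerging as the Jacobian of the Lagrange inversion.
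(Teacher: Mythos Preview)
Your strategy is exactly the paper's: restrict to one-row partitions $\lambda^0=(a)$, $\lambda^1=(b)$ via $P^\alpha_\lambda(x,0)=0$ for $\length(\lambda)\ge 2$, write the weight products in terms of $F(z)=f(z)f(-z)$, reorganise by $n=a+b$, apply Lemma~\ref{lem:dots} together with~\eqref{eq:cases} to discard the lower-order remainder, and finish with one-variable Lagrange inversion.

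Two points to fix. First, your computation of $W_{(a)}(-1,-1)$ slips by one: with $\alpha=\beta=-1$ and $l(w)=0$ the second weight is $-\alpha(l(w)+1)-\beta\,a(w)=1+a(w)$, not $a(w)$, so $W_{(a)}(-1,-1)=\{\pm1,\dots,\pm a\}$ and the corresponding product is $\prod_{w=1}^{a}F(wu)$, not $f(-au)\prod_{w=1}^{a-1}F(wu)$. With this correction, your (correct) pairing for $W_{(b)}(2,1)$ combines with the $\lambda^0$-factor to give
\[
\frac{f(-(b+1)u)}{f(-u)}\prod_{w=-b}^{a}F(wu),
\]
which is precisely the integrand the paper feeds into Lemma~\ref{lem:dots}. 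Second, the appearance of $H(x)=x/f(-x)$ and $h$ is a red herring here: the inversion is for $G(x)=x/F(x)$ alone. After the reductions one is left with $\sum_{n\ge0}(-x)^n\res_a\, f(a)/G^{n+1}(a)$, and Lagrange gives $Z_3(-G(x),0)=f(x)/G'(x)$; since $G$ is odd this is already $Z_3(x,0)=f(-g(x))\,g'(x)$, with no separate matching step required.
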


    \begin{proof}
    	The proof goes along the same lines as the proof of Lemma~\ref{lem:z2}.
    	By definition
    	of the Jack polynomials it is
    	$P_\lambda^\alpha(x, 0) = 0$ for any partition of length greater than $1$
    	and $P_{(a)}^\alpha(x) = x^a$ for $a \ge 0$ (independently of the parameter $\alpha$).
    	Thus only the summands corresponding to partitions of
    	length one contribute to the sum $Z_3(x, 0)$. Note that
    	$W_{(a)}(-1, -1) = \Set{\pm 1, \ldots, \pm a}$,
    	$W_{(a)}(2, 1) = \Set{- (a + 1), \ldots, -2, 1, \ldots, a}$, $c'_{(a)}(-1, -1) = (-1)^a a!$,
    	and $c'_{(a)}(2, 1) = a!$.
    	As above, we set $F(x) := f(x) f(-x)$. The definition of $Z_3(x, 0)$ in~\eqref{eq:def_w} yields
    	\[
    	    Z_3(x, 0) = \sum_{n \ge 0} \sum_{a\ge 0} (-1)^a x^n [u^n]
    	    \frac{f(- (n - a + 1) u) \prod_{w=a-n}^{a} F(w u)}{a! (n - a)! f(-u)}.
    	\]
    	By Lemma~\ref{lem:dots}, there exists a power series $X(a, u) \in uA[[au, u]]$ with
    	\begin{equation*}
        	Z_3(x, 0) = \sum_{n \ge 0} \sum_{a \ge 0} (-1)^a x^n [u^n] \left(
        	\frac{f(a u) F^{n + 1}(au)}{a! (n - a)!} + X(a, u)\right).
        \end{equation*}
        By~\eqref{eq:cases}, it follows that
        \[
            Z_3(x, 0) = \sum_{n \ge 0} (-1)^n x^n [a^n] f(a) F^{n + 1}(a)
            = \sum_{n \ge 0} (-x)^n \res_a \frac{f(a)}{G^{n + 1}(a)}.
        \]
    	Again we can make use of the Lagrange--Good formula, which yields
    	\[
    	    Z_3(- G(x), 0) = \frac{f(x)}{G'(x)}.
    	\]
    	As $G$ is an odd power series, the claim of the Lemma follows.
    \end{proof}

    \begin{lemma}
    	\label{lem:dots}
    	Let $f \in 1 + xA[[x]]$ be any formal power series with constant coefficient $1$. Let $n$ and $a$
    	be two integers with $n \ge a \ge 0$.
        Then there exists a power series $X(a, u) \in uA[[ua, u]]$ in each of which
        monomials the degree in $a$ is strictly less than the degree in $u$ such
        that
        \[
            \prod_{w = a - n}^a f(wu) = f^{n + 1}(a u) + X(a, u).
        \]
    \end{lemma}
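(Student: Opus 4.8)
The plan is to work inside the ring $A[[ua,u]]$, which I would read as the ring of formal power series in two independent indeterminates: $u$ itself, and the ``block'' $t:=ua$. Write $f=\sum_{m\ge 0}c_m x^m$ with $c_0=1$. The indices occurring in the product are $w=a-k$ for $k=0,1,\ldots,n$, so each factor is $f(wu)=f(t-ku)=\sum_{m\ge 0}c_m(t-ku)^m$; since $(t-ku)^m$ is a polynomial in $t$ and $u$ homogeneous of degree $m$ (assigning $t$ and $u$ both weight $1$), only one value of $m$ contributes to each monomial $t^p u^q$, so $f(t-ku)$ is a well-defined element of $A[[t,u]]=A[[ua,u]]$. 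The left-hand side is then the finite product $\prod_{k=0}^{n}f(t-ku)$, and $f(au)^{n+1}=f(t)^{n+1}=\bigl(\sum_{m\ge 0}c_m t^m\bigr)^{n+1}$ is likewise in $A[[ua,u]]$, so the proposed difference $X(a,u)$ makes sense in that ring.

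The heart of the argument is to reduce modulo the ideal $(u)$, using $A[[ua,u]]/(u)\cong A[[t]]$. Under this reduction every $t-ku$ maps to $t$, hence each factor $f(t-ku)$ maps to $f(t)$ and the whole product maps to $f(t)^{n+1}$, which is exactly the image of $f(au)^{n+1}$. Therefore $X(a,u):=\prod_{w=a-n}^{a}f(wu)-f(au)^{n+1}$ lies in the kernel $(u)=u\,A[[ua,u]]$, giving the asserted containment at once.

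Finally I would translate this divisibility into the statement about monomials: every element of $u\,A[[ua,u]]$ is an $A$-linear combination of monomials $(ua)^i u^j$ with $j\ge 1$, and rewriting $(ua)^i u^j=a^i u^{i+j}$ shows that the degree $i$ in $a$ is strictly smaller than the degree $i+j$ in $u$, as required. I do not anticipate a real obstacle here; the only point needing care is the very first step, namely justifying that each $f(wu)$ — and hence the product — is genuinely a power series in the two quantities $ua$ and $u$ rather than merely in $u$, so that the reduction modulo $u$ is meaningful; once that bookkeeping is in place the proof is immediate. (The hypothesis $n\ge a\ge 0$ is not used beyond pinning down the range of the product as it arises in the applications in Lemmas~\ref{lem:z2} and~\ref{lem:z3}.)
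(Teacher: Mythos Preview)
Your argument is correct. Introducing the formal variable $t=ua$, observing that each factor $f((a-k)u)=f(t-ku)$ lies in $A[[t,u]]$, and then reducing modulo the ideal $(u)$ so that every factor collapses to $f(t)$ is exactly the right mechanism; the difference is then visibly in $uA[[t,u]]$, and your translation back to the monomial condition on the $a$- and $u$-degrees is fine. Your remark that the hypothesis $n\ge a\ge 0$ plays no role in the formal argument is also accurate: it only fixes the shape of the product as it occurs in the applications.

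As for comparison: the paper does not actually prove this lemma here but defers to~\cite{boissiere-nieper:universal}. Your self-contained argument via reduction modulo $(u)$ in $A[[ua,u]]$ is short and transparent, and is presumably equivalent in spirit to the referenced proof; in any case it stands on its own.
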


    \begin{proof}
        This statement has already appeared at the end of~\cite{boissiere-nieper:universal},
        where also a proof can be found.
    \end{proof}

    \appendix
    
    \bibliographystyle{amsalpha}
    \bibliography{my}
\end{document}